\newtheorem{theorem}{Theorem}
\newtheorem{conjecture}[theorem]{Conjecture}
\newtheorem{claim}{Claim}
\newtheorem{case}{Case}
\begin{document}

\onehalfspace

\title{Induced Matchings in Graphs of Bounded Maximum Degree}

\author{Felix Joos\thanks{Institut f\"{u}r Optimierung und Operations Research, 
Universit\"{a}t Ulm, Ulm, Germany,
e-mail: \texttt{felix.joos@uni-ulm.de}}
}

\date{}

\maketitle

\vspace{-0.5cm}


\begin{abstract}
For a graph $G$, let $\nu_s(G)$ be the induced matching number of $G$.
We  prove that
$\nu_s(G) \geq \frac{n(G)}{(\lceil\frac{\Delta}{2}\rceil+1) (\lfloor\frac{\Delta}{2}\rfloor+1)}$
for every graph of sufficiently large maximum degree $\Delta$	 and without isolated vertices.
This bound is sharp.
Moreover, there is polynomial-time algorithm which computes induced matchings of size as stated above.
\end{abstract}

{\small \textbf{Keywords:}  induced matching; strong matching; strong chromatic index}\\
\indent {\small \textbf{AMS subject classification:}
05C70, 
05C15 
}

\section{Introduction}

For a graph $G$, a set $M$ of edges is an \emph{induced matching} of $G$ if
no two edges in $M$ have a common endvertex and no edge of $G$ joins two edges of $M$.
The maximum number of edges that form an induced matching in $G$ is the {\it strong matching number $\nu_s(G)$ of $G$}. 
We denote by $\Delta(G)$ the maximum degree of graph $G$
and let $n(G)=|V(G)|$ and $m(G)=|E(G)|$. 

In contrast to the well known matching number $\nu(G)$, which can be computed in polynomial time \cite{ed},
it is NP-hard to determine the strong matching number even in bipartite subcubic graphs \cite{ca1,lo,stva}.
In fact, the strong matching number is even hard to approximate in restricted graphs classes as for example regular bipartite graphs \cite{dadelo}.
 
To the best of my knowledge,
the only known bound in terms of the order and the maximum degree
for $\nu_s(G)$ is obtained by the following simple observation \cite{zito}.
Let $G$ be a graph without isolated vertices.
There are at most $2\Delta(G)^2-2\Delta(G)+1$ many edges in distance at most $1$ from $e$ including $e$
and $m(G)\geq \frac{1}{2} n(G)$.
Thus a simple greedy algorithm implies
\begin{align*}
	\nu_s(G)\geq \frac{ n(G)}{2(2\Delta(G)^2-2\Delta(G)+1)},
\end{align*}
which is far away from being sharp if $G\not=K_2$.

It seems that the different behavior of $\nu(G)$ and $\nu_s(G)$ transfers to the corresponding partitioning problems.
The chromatic index $\chi'$ seems much simpler than the strong chromatic index $\chi_s'$, 
defined as the minimum number of induced matchings one needs to partition the edge set.
While for $\chi'(G)$ Vizing's Theorem always gives $\chi'(G)\in\{\Delta(G),\Delta(G)+1\}$ \cite{vi},
no comparable result holds for the strong chromatic index.
 
A trivial greedy algorithm ensures $\chi_s'(G)\leq 2\Delta(G)^2-2\Delta(G)+1$.
Erd{\H{o}}s and Ne{\v{s}}et{\v{r}}il \cite{fashgytu2} conjectured $\chi_s'(G)\leq \frac{5}{4}\Delta(G)^2$,
which would  be best possible for even $\Delta$
because
equality holds for the graph 
obtained from the $5$-cycle by replacing every vertex by an independent set of order $\frac{\Delta}{2}$.
The best general result in this direction is due to Molloy and Reed, 
who proved that $\chi_s'(G)\leq 1.998\Delta(G)^2$ for sufficiently large maximum degree \cite{more}.
Thus Erd{\H{o}}s and Ne{\v{s}}et{\v{r}}il's conjecture is widely open and it is even unknown which technique is suitable to improve Molloy and Reed's result substantially. 

In this paper I provide more insight concerning the behavior of induced matchings by improving the known lower bounds on $\nu_s(G)$ 
to a sharp lower bound
provided that the maximum degree is sufficiently large.


\begin{theorem}\label{result max degree}
There is an integer $\Delta_0$ such that for every graph $G$ of maximum degree $\Delta$ at least $\Delta_0$ and without isolated vertices,
\begin{align*}
\nu_s(G) \geq \frac{n(G)}{\left(\lceil\frac{\Delta}{2}\rceil+1\right) \left(\lfloor\frac{\Delta}{2}\rfloor+1\right)}
\end{align*}
holds.
\end{theorem}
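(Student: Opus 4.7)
Write $a = \lceil\Delta/2\rceil + 1$ and $b = \lfloor\Delta/2\rfloor + 1$, so the target bound reads $\nu_s(G) \geq n(G)/(ab)$. I would proceed by induction on $n(G)$ with $\Delta$ (assumed $\geq \Delta_0$) fixed. The base case $n(G) \leq ab$ is immediate: since $G$ has no isolated vertex, it contains an edge, and $\nu_s(G) \geq 1 \geq n(G)/(ab)$. For the inductive step the scheme is to locate an edge $uv \in E(G)$ and a vertex set $S \subseteq V(G)$ with $N_G[u] \cup N_G[v] \subseteq S$, $|S| \leq ab$, and $G - S$ containing no isolated vertex. Adding $uv$ to the induced matching returned by the induction hypothesis applied to $G - S$ (whose maximum degree is still at most $\Delta$) then gives $\nu_s(G) \geq 1 + (n(G)-|S|)/(ab) \geq n(G)/(ab)$, completing the step.

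The combinatorial work is in locating such a pair $(uv,S)$. Because $|N_G[u] \cup N_G[v]| \leq \deg(u)+\deg(v) \leq 2\Delta$ and a direct calculation gives $2\Delta \leq ab$ for every $\Delta \geq 2$, there is a budget of roughly $\Delta^2/4$ vertices free to absorb the set of \emph{newly isolated} vertices
$$I_{uv} = \{w \in V(G) \setminus (N_G[u] \cup N_G[v]) : N_G(w) \subseteq N_G[u] \cup N_G[v]\}.$$
The task thus reduces to choosing $uv$ with $|I_{uv}| \leq ab - |N_G[u] \cup N_G[v]|$; then $S := (N_G[u] \cup N_G[v]) \cup I_{uv}$ works, and $G-S$ has no isolated vertex and maximum degree at most $\Delta$, as required for induction.

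\textbf{Main obstacle.} Proving that such an edge $uv$ always exists is where the heart of the argument lies. A vertex $w \in I_{uv}$ forces $N_G(w) \subseteq N_G[u] \cup N_G[v]$, a rigid constraint that is particularly restrictive when $\deg(w)$ is close to $\Delta$; the natural split is therefore between low-degree vertices and the rest. Low-degree vertices (in particular leaves) I would eliminate in a preliminary phase by greedily placing suitable incident edges into the induced matching and charging them against their own small deleted neighborhoods; for the remaining graph, a double-counting of pairs $(uv,w)$ with $w \in I_{uv}$, or an extremal choice of $uv$ maximizing $\deg(u)+\deg(v)$, should locate a good edge. The assumption $\Delta \geq \Delta_0$ is crucial: it makes the quadratic slack $ab-2\Delta$ dominate the error terms in the averaging and also absorbs the cascade effects produced by the reduction phase. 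I expect the main technical hurdle to be controlling this reduction so that the graph finally passed to induction still has no isolated vertex while the cumulative number of deleted vertices never outpaces $ab$ times the number of matching edges already selected. The algorithmic claim then follows because every step of the argument (the reduction, the choice of $uv$, and the computation of $S$) is clearly polynomial.
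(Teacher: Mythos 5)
Your reduction framework is exactly the paper's: pick an edge $uv$, delete $S=N[u]\cup N[v]$ together with the set $I_{uv}$ of vertices isolated by that deletion, and win provided $|N[u]\cup N[v]|+|I_{uv}|\leq\bigl(\lceil\frac{\Delta}{2}\rceil+1\bigr)\bigl(\lfloor\frac{\Delta}{2}\rfloor+1\bigr)$. But everything you label the ``main obstacle'' is in fact the entire content of the theorem, and your proposal does not prove it; it only lists candidate strategies (``a double-counting \dots or an extremal choice \dots should locate a good edge''). That is a genuine gap, and the specific strategies you name point in unpromising directions. First, leaves cannot be dispatched in a cheap preliminary phase: the extremal graphs $H_1,H_2$ consist of a clique with pendant vertices attached, every edge $uv$ there has $|N[u]\cup N[v]|+|I_{uv}|$ equal to the full budget $ab$, and the newly isolated set is dominated by leaves. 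So the leaf case is the tight case, not a side issue, and any ``cascade'' charging scheme has zero slack to absorb errors there. Second, choosing $uv$ to maximize $\deg(u)+\deg(v)$ does not control $|I_{uv}|$: a vertex $w$ becomes isolated when all of its neighbours lie in $N[u]\cup N[v]$, which is easiest when $w$ has small degree, so what one must control is the number of low-degree vertices hanging off $N(u)\cup N(v)$. The paper does this with the harmonic potential $f(v)=\sum_{w\in N(v),\,d(w)\neq\Delta}1/d(w)$ (which upper-bounds the contribution of each potential isolated vertex by exactly $1$ in total), chooses $v$ to \emph{maximize} $f$, pairs it with a \emph{minimum}-degree neighbour $u$, and then runs a careful optimization over the degree profile $(n_1,n_s,n_\ell,n_\Delta)$ of $N(v)$ whose maximum is exactly $n_\Delta(\Delta-n_\Delta)\leq\lceil\frac{\Delta}{2}\rceil\lfloor\frac{\Delta}{2}\rfloor$. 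A crude double count of pairs $(uv,w)$ will give the right order $\Delta^2$ but not the constant $\frac14$, and the constant is the whole point: the trivial greedy bound $n/(2(2\Delta^2-2\Delta+1))$ already gives the right order.

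Two smaller issues. Your inductive hypothesis is the theorem itself, but $G-S$ may have maximum degree below $\Delta_0$ (or below $\Delta$), in which case the statement you are inducting on either does not apply or applies with a different denominator; the paper avoids this by taking a counterexample minimizing $\nu_s(G)$ and, crucially, maximizing $n(G)$ subject to that, which also yields the structural fact (used repeatedly) that every neighbour of a degree-one vertex has degree exactly $\Delta$. You would need some analogous device. Also note $2\Delta\leq ab$ is comfortably true but irrelevant as stated; the binding constraint is on $|S|+|I_{uv}|$, where $|I_{uv}|$ can be of order $\Delta^2$, not on $|S|$ alone.
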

\noindent
The following construction shows that the bound in Theorem \ref{result max degree} is sharp.
Let $\Delta$ be an integer at least $3$ and
let the graph $H_1$ arise from the complete graph on $\left\lceil \frac{\Delta}{2} \right \rceil+1$ vertices by
attaching at each vertex $\left \lfloor \frac{\Delta}{2} \right\rfloor$ pendant vertices.
Let $H_2$ arise from the complete graph on $\left\lfloor \frac{\Delta}{2} \right \rfloor+1$ vertices by
attaching at each vertex $\left \lceil \frac{\Delta}{2} \right\rceil$ pendant vertices.
It follows that $\nu_s(H_i)=	1$ and $n(H_i)= \left(\left\lceil \frac{\Delta}{2} \right \rceil+1\right) \left(\lfloor\frac{\Delta}{2}\rfloor+1\right)$; that is, the bound of Theorem \ref{result max degree} is sharp.
Note that $H_1=H_2$ if $\Delta$ is even.

For the sake of simplicity I do not try to optimize the constant $\Delta_0$ intensively.
We show  Theorem  \ref{result max degree} for $\Delta_0=1000$ but
with some more effort one can lower the bound down to $200$.

In \cite{jorasa} the same bound as in Theorem \ref{result max degree} is already shown by a simple inductive argument for graphs of girth at least $6$.
Hence one might ask whether the bound in Theorem \ref{result max degree} can be improved for graphs of large girth to $\frac{n(G)}{\Delta^c}$ for some $c<2$.
However, this is not the case.
By a result of Bollob{\'a}s \cite{bo},
for every $g \geq 3$ and $\Delta \geq 6$, there is a graph $H'$ 
of maximum degree $\lfloor\frac{\Delta}{2}\rfloor$, girth at least $g$, and independence number at most $\frac{4 \log \Delta}{\Delta}n(H')$.
Let $H$ arise from $H'$ by attaching to each vertex $\lceil\frac{\Delta}{2}\rceil$ many pendant vertices.
Note that $\nu_s(H)\leq \frac{4 \log \Delta}{\Delta}n(H')$ and $n(H)= \lceil\frac{\Delta}{2}\rceil n(H')$.
Thus $\nu_s(H) \leq \frac{8 \log \Delta}{\Delta^2}n(H)$ and the bound of Theorem \ref{result max degree} can only be improved by a $O(\log \Delta)$-factor.

Since the proof of Theorem \ref{result max degree} is constructive,
it is easy to derive a polynomial-time algorithm,
which computes an induced matching of size as guaranteed in Theorem \ref{result max degree}.

We use standard notation and terminology.
For a graph $G$, let $V(G)$ and $E(G)$ denote its vertex set and edge set, respectively.
For a vertex $v$, let $d_G(v)$ be its degree, let $N_G(v)$ be the set of neighbors of $v$, and
let $N_G[v]=N_G(v)\cup \{v\}$.
If the corresponding graph is clear from the context, we only write $d(v)$, $N(v)$ and $N[v]$, respectively. 
A set $I$ of vertices of $G$ is independent if there is no edge joining two vertices in $I$.

\section{Proof of Theorem \ref{result max degree}}
We prove the theorem for $\Delta_0=1000$.
Let $G$ be a graph with maximum degree $\Delta$ at least $\Delta_0$ and without isolated vertices.
For a contradiction, we assume that $G$ is a counterexample such that 
\begin{enumerate}[(1)]
	\item $\nu_s(G)$ is minimum and
	\item subject to (1), the order of $G$ is maximum. 
\end{enumerate}
Since $\nu_s(G)\geq \frac{n(G)}{2\Delta^2}$, the graph $G$ is well-defined.

The choice of $G$ implies that if $v$ is a vertex of $G$ that is adjacent to a vertex of degree~$1$,
then $d(v)=\Delta$ because adding new vertices to $G$ and joining them to $v$ does not increase $\nu_s(G)$ but the order of $G$.

For some calculations it might help to know that $\frac{\Delta^2}{4}+\Delta+\frac{3}{4}
\leq \left(\lceil\frac{\Delta}{2}\rceil+1\right) \left(\lfloor\frac{\Delta}{2}\rfloor+1\right)$.

\begin{claim}\label{d1}
For every edge $uv$ of $G$, we have $d(u)+d(v) > \frac{\Delta}{4}$.
\end{claim}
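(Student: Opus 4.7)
The plan is to suppose for contradiction that some edge $uv$ of $G$ satisfies $d(u) + d(v) \leq \Delta/4$, and then construct an auxiliary graph to which the extremal choice of $G$ can be applied. First I would set $A := N[u] \cup N[v]$ and let $B := \{w \in V(G) \setminus A : N_G(w) \subseteq A\}$ be the vertices that become isolated in $G - A$; define $G_0 := G - A - B$. Two quick observations are fundamental: $G_0$ has no isolated vertices (any $w \in V(G_0)$ must have a neighbor outside $A$, which cannot lie in $B$ since vertices of $B$ have all neighbors in $A$, hence the neighbor lies in $V(G_0)$), and $\{uv\}$ together with any induced matching of $G_0$ is an induced matching of $G$, giving $\nu_s(G) \geq 1 + \nu_s(G_0)$.

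Next I would establish the key counting bound $|A| + |B| < c$, where $c := (\lceil\Delta/2\rceil+1)(\lfloor\Delta/2\rfloor+1)$. Clearly $|A| \leq d(u) + d(v) \leq \Delta/4$. Since every $w \in B$ satisfies $N_G(w) \subseteq A \setminus \{u, v\}$ (because $w \notin N(u) \cup N(v)$), counting edges between $A \setminus \{u, v\}$ and $B$ with the lower bound $|N_G(w) \cap A| \geq 1$ for $w \in B$ gives $|B| \leq (|A| - 2)\Delta \leq (\Delta/4 - 2)\Delta$, so $|A| + |B| \leq \Delta^2/4 - 7\Delta/4$, which is strictly less than the lower bound $\Delta^2/4 + \Delta + 3/4$ for $c$ recorded just above the claim.

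The crucial trick is to ``import'' the sharpness example $H_1$ from the introduction, which has $\nu_s(H_1) = 1$, $n(H_1) = c$, $\Delta(H_1) = \Delta$, and no isolated vertex. Setting $\tilde{G} := G_0 \sqcup H_1$ (disjoint union) yields a graph with $\Delta(\tilde{G}) = \Delta \geq \Delta_0$, no isolated vertex, $n(\tilde{G}) = n(G_0) + c$, and $\nu_s(\tilde{G}) = \nu_s(G_0) + 1 \leq \nu_s(G)$. A dichotomy on whether $\tilde{G}$ is itself a counterexample to Theorem~\ref{result max degree} then closes the argument. If yes, the minimality (1) forces $\nu_s(\tilde{G}) = \nu_s(G)$ and then the maximality (2) yields $n(\tilde{G}) \leq n(G)$, which combined with $n(G_0) \geq n(G) - |A| - |B|$ gives $|A| + |B| \geq c$, contradicting the bound above. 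If no, then $\nu_s(\tilde{G}) \geq n(\tilde{G})/c$ simplifies to $\nu_s(G_0) \geq n(G_0)/c$, and substituting into $\nu_s(G) \geq 1 + \nu_s(G_0)$ together with $|A| + |B| < c$ yields $\nu_s(G) > n(G)/c$, again a contradiction. The main conceptual obstacle is to choose the auxiliary graph so that both horns of the dichotomy collide with the same inequality $|A| + |B| < c$; the sharpness example $H_1$ is uniquely suited for this because its parameters $(\nu_s, n) = (1, c)$ are exactly calibrated to this purpose.
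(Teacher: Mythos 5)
Your proof is correct and follows essentially the same route as the paper: the same decomposition into $S=N[u]\cup N[v]$ together with the set of vertices isolated by its removal, the same degree count showing $|S|+|I|\le \frac{\Delta}{4}+\left(\frac{\Delta}{4}-2\right)\Delta$, which is comfortably below $c=\left(\lceil\frac{\Delta}{2}\rceil+1\right)\left(\lfloor\frac{\Delta}{2}\rfloor+1\right)$, and the same final step of adjoining $uv$ to a maximum induced matching of the remainder. The one place you genuinely diverge is in how the extremal choice of $G$ is invoked. The paper simply asserts that $\nu_s(G')<\nu_s(G)$ and the minimality of $\nu_s(G)$ force $\nu_s(G')\ge n(G')/c$; this tacitly assumes that $G'$ still has maximum degree at least $\Delta_0$, since otherwise $G'$ fails the theorem's hypotheses and is ``not a counterexample'' only vacuously. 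Your disjoint union with the sharpness gadget $H_1$ --- whose parameters $\nu_s(H_1)=1$, $n(H_1)=c$, $\Delta(H_1)=\Delta$ are exactly calibrated so that both horns of your dichotomy collapse onto the single inequality $|A|+|B|<c$ --- removes that tacit assumption and makes the reduction airtight regardless of what $G_0$ looks like (including $G_0$ empty or of small maximum degree). This costs a few extra lines but buys rigor; the counting core of the argument is unchanged from the paper's.
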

\begin{proof}[Proof of Claim \ref{d1}]
For a contradiction, we assume that there is an edge $uv$ such that
$d(u)+d(v) \leq \frac{\Delta}{4}$.
Let $S=N[u]\cup N[v]$ and
let $I$ be the set of all isolated vertices of $G-S$. 
Let $G'=G-S-I$.
Since $\nu_s(G)\geq \nu_s(G')+1$,
the choice of $G$ implies $\nu_s(G') \geq \frac{n(G')}{\left(\lceil\frac{\Delta}{2}\rceil+1\right) \left(\lfloor\frac{\Delta}{2}\rfloor+1\right)}$.

By using the assumption $d(u)+d(v) \leq \frac{\Delta}{4}$,
we conclude $|S|+|I|\leq \left(\frac{\Delta}{4}-2\right)\Delta+2< \left(\lceil\frac{\Delta}{2}\rceil+1\right) \left(\lfloor\frac{\Delta}{2}\rfloor+1\right)$.
Therefore, $uv$ together with a maximum induced matching of $G'$ is an induced matching of $G$
of size at least $\frac{n(G)}{\left(\lceil\frac{\Delta}{2}\rceil+1\right) \left(\lfloor\frac{\Delta}{2}\rfloor+1\right)}$,
which contradicts the choice of $G$.
\end{proof}

\begin{claim}\label{d2}
Every vertex $v$ of $G$ is adjacent to at most $\frac{3}{4}\Delta$ many vertices of degree at most $9$.
\end{claim}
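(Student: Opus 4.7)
The plan is to argue by contradiction: suppose some vertex $v$ of $G$ has a set $T$ of more than $\frac{3}{4}\Delta$ neighbors, each of degree at most $9$. First I would invoke Claim \ref{d1} to show that $T$ is an independent set in $G$: any two $u, u' \in T$ would satisfy $d(u) + d(u') \leq 18 < \frac{\Delta}{4}$ (since $\Delta \geq 1000$), forcing $uu' \notin E(G)$. Writing $H = N(v) \setminus T$, we therefore obtain the strict inequality $|H| = d(v) - |T| < \frac{\Delta}{4}$, which will be crucial, and the total number of edges leaving $T$ other than to $v$ is at most $8|T|$.

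Next I would fix some $u \in T$ and try to include the edge $uv$ in an induced matching of $G$. Setting $S = N[u] \cup N[v]$ and letting $I$ be the set of isolated vertices of $G - S$, the minimality of $G$ applied to $G - S - I$ would yield
\[
\nu_s(G) \geq 1 + \nu_s(G - S - I) \geq 1 + \frac{n(G) - |S| - |I|}{(\lceil \frac{\Delta}{2} \rceil + 1)(\lfloor \frac{\Delta}{2} \rfloor + 1)},
\]
which contradicts the choice of $G$ provided $|S| + |I| \leq (\lceil \frac{\Delta}{2} \rceil + 1)(\lfloor \frac{\Delta}{2} \rfloor + 1)$. Trivially $|S| \leq d(u) + d(v) \leq \Delta + 9$. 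To control $|I|$, I would split $I = I_1 \cup I_2$, where $I_1$ is the set of vertices of $I$ with at least one neighbor in $T$ and $I_2$ those with none. Then $|I_1| \leq 8|T|$ by edge-counting from $T$, while each $w \in I_2$ has all its neighbors in $H \cup (N(u) \setminus N[v])$, a set of size at most $|H| + 8 < \frac{\Delta}{4} + 8$; counting the edges from this set into $V \setminus S$ yields $|I_2| \leq (|H| + 8)(\Delta - 1)$.

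The main obstacle is that these crude estimates give $|S| + |I|$ of order $\frac{\Delta^2}{4} + O(\Delta)$, while the target $(\lceil \frac{\Delta}{2} \rceil + 1)(\lfloor \frac{\Delta}{2} \rfloor + 1)$ is only $\frac{\Delta^2}{4} + \Delta + O(1)$; closing this linear-in-$\Delta$ gap is the heart of the argument. I would expect to proceed in one of two ways. The first, sharper direction is to refine the bound on $|I_2|$ by applying Claim \ref{d1} to edges $wh$ with $w \in I_2$ and $h \in H$: since $d(w)$ is small, every such $h$ must have $d(h) > \frac{\Delta}{4} - d(w)$, which limits how many $w$ can actually occur. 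The second, more robust direction is to enlarge the induced matching by a second edge $u'w'$ with $u' \in T \setminus \{u\}$ and $w' \notin N[u] \cup N[v]$ that is pairwise-compatible with $uv$, thereby doubling the available budget for $|S| + |I|$; since $|T|$ is much larger than the $O(\Delta)$ vertices blocked by the first edge, a counting argument exploiting $d(u') \leq 9$ for every $u' \in T$ should produce such a second edge, and I expect its existence to be the hardest step of the proof.
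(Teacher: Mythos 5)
Your setup (the edge $uv$ with $u$ a low-degree neighbor of $v$, the set $S=N[u]\cup N[v]$, the isolated vertices $I$ of $G-S$, and the contradiction with the minimality of $G$ once $|S|+|I|\leq(\lceil\frac{\Delta}{2}\rceil+1)(\lfloor\frac{\Delta}{2}\rfloor+1)$) is exactly the paper's, and you correctly locate the difficulty in the $\Theta(\Delta)$ overshoot of the crude count. But neither of your two proposed repairs closes the gap. Direction 2 is impossible as stated: every $u'\in T\setminus\{u\}$ is a neighbor of $v$, so the edge $vu'$ joins $uv$ to any edge $u'w'$ incident with $u'$, and $\{uv,u'w'\}$ is never an induced matching. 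Direction 1 is toothless: Claim \ref{d1} only forces the $S$-neighbors of a low-degree vertex of $I_2$ to have degree greater than $\frac{\Delta}{4}-d(w)$, which is a constraint on those neighbors' degrees, not a bound on how many such $w$ exist; since the vertices of $H$ may all have degree $\Delta$, nothing prevents each of the up to $\lceil\frac{\Delta}{4}\rceil-1$ vertices of $H$ from carrying $\Delta-1$ pendant (hence isolated-in-$G-S$) neighbors, and together with $|S|\leq\Delta+9$, $|I_1|\leq 8\Delta$ and the $8(\Delta-1)$ edges from $N(u)\setminus N[v]$ your estimate stays around $\frac{\Delta^2}{4}+16\Delta$, far above the budget $\frac{\Delta^2}{4}+\Delta+O(1)$.

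The two ingredients you are missing are exactly what the paper uses. First, the claim is proved for the vertex $v$ that \emph{maximizes} the number $\alpha\Delta$ of neighbors of degree at most $9$ (which suffices, since the bound for the extremal vertex implies it for all). Then every vertex of $S$, in particular every vertex of $H$, also has at most $\alpha\Delta$ neighbors of degree at most $9$, so the number of low-degree vertices of $I$ is at most about $\bigl((1-\alpha)\Delta+8\bigr)\cdot\alpha\Delta\leq\frac{3}{16}\Delta^2+8\Delta$; the smallness of the product $\alpha(1-\alpha)$ for $\alpha>\frac{3}{4}$ is precisely what kills the pendant-vertex configuration above. Second, $I$ is partitioned by \emph{degree} (at most $9$ versus at least $10$), not by adjacency to $T$: the low-degree part attaches only to the at most $(1-\alpha)\Delta+8$ high-degree vertices of $S$ (this is where Claim \ref{d1} enters) and is controlled as just described, while each high-degree vertex of $I$ consumes at least $10$ edges leaving $S$, contributing only a $\frac{1}{10}$-fraction of an $O(\Delta^2)$ edge count. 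Summing gives $|I|<0.22\Delta^2+9\Delta\leq\frac{\Delta^2}{4}-|S|$. Without the extremal choice of $v$, an argument along your lines cannot get below $\frac{\Delta^2}{4}+\Theta(\Delta)$, so the proposal has a genuine gap.
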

\begin{proof}[Proof of Claim \ref{d2}]
Choose $v$ such that the number of neighbors of degree at most $9$ is maximal.
Say $v$ has $\alpha\Delta$ many such neighbors.
For a contradiction, we assume that $\alpha>\frac{3}{4}\Delta$.
Let $u\in N(v)$ be of degree at most $9$.
As above, let $S=N[u]\cup N[v]$ and
let $I$ be the set of all isolated vertices of $G-S$. 
Let $G'=G-S-I$.
By Claim \ref{d1}, every vertex in $I$ that is adjacent to a vertex of degree at most $9$, has degree at least $10$.
Thus there are at most $(1-\alpha)\Delta+8$ many vertices in $S$ that are adjacent to vertices in $I$ of degree at most $9$.
Hence there are at most $\alpha(1-\alpha)\Delta^2+8\Delta$ many vertices in $I$ of degree at most $9$.
Furthermore, at most $8\alpha \Delta$ edges join vertices in $I$ and vertices in $N(v)\setminus \{u\}$ such that the vertices in $N(v)\setminus \{u\}$ have degree at most $9$.
Since $\alpha(1-\alpha)+ \frac{1}{10}(1-\alpha)^2< 0.22$, this implies
\begin{align*}
	|I|&\leq \alpha(1-\alpha)\Delta^2+8\Delta + \frac{1}{10}\left((1-\alpha)^2\Delta^2 +8\alpha\Delta \right)\\
	&< 0.22\Delta^2+9\Delta\\
	&\leq \frac{\Delta^2}{4}-9.
\end{align*}
Since $|S|\leq \Delta + 9$,
we obtain
$$|I|+|S|< \left(\left\lceil\frac{\Delta}{2}\right\rceil+1\right) \left(\left\lfloor\frac{\Delta}{2}\right\rfloor+1\right). $$
Again, the edge $uv$ together with a maximum induced matching of $G'$ is an induced matching of $G$
of size at least $\frac{n(G)}{\left(\lceil\frac{\Delta}{2}\rceil+1\right) \left(\lfloor\frac{\Delta}{2}\rfloor+1\right)}$,
which contradicts the choice of $G$.
\end{proof}

\noindent
Let $f: V(G)\rightarrow \mathbb{R}$ be such that
\begin{align*}
	f(v)= \sum_{w\in N(v):\ d(w)\not= \Delta}\frac{1}{d(w)}.
\end{align*}

\begin{claim}\label{d3}
If a vertex $v$ of $G$ is not adjacent to a vertex of degree $1$,
then $f(v)\leq \frac{2}{5}\Delta$.
\end{claim}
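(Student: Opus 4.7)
The plan is to partition $N(v)$ by the degree of its members and then apply Claim \ref{d2} to the low-degree part.

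Since $v$ is not adjacent to any vertex of degree $1$, every $w\in N(v)$ satisfies $d(w)\geq 2$. Split $N(v)=A\cup B\cup C$, where
$A=\{w\in N(v):2\leq d(w)\leq 9\}$, $B=\{w\in N(v):10\leq d(w)\leq \Delta-1\}$, and $C=\{w\in N(v):d(w)=\Delta\}$. Only $A$ and $B$ contribute to $f(v)$, because vertices in $C$ are excluded by the definition of $f$. The obvious per-vertex bounds give that each $w\in A$ contributes at most $\frac{1}{2}$ to $f(v)$ and each $w\in B$ contributes at most $\frac{1}{10}$.

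By Claim \ref{d2} we have $|A|\leq \frac{3}{4}\Delta$, and trivially $|A|+|B|\leq d(v)\leq \Delta$, so $|B|\leq \Delta-|A|$. Combining these,
\begin{align*}
f(v) \;\leq\; \tfrac{1}{2}|A|+\tfrac{1}{10}|B|
\;\leq\; \tfrac{1}{2}|A|+\tfrac{1}{10}\bigl(\Delta-|A|\bigr)
\;=\; \tfrac{1}{10}\Delta + \tfrac{2}{5}|A|
\;\leq\; \tfrac{1}{10}\Delta + \tfrac{2}{5}\cdot\tfrac{3}{4}\Delta
\;=\; \tfrac{2}{5}\Delta,
\end{align*}
which is the required bound.

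There is really no obstacle here: the inequality $\frac{1}{2}\cdot\frac{3}{4}+\frac{1}{10}\cdot\frac{1}{4}=\frac{2}{5}$ is exactly what the splitting threshold $9$ in Claim \ref{d2} and the constant $\frac{3}{4}$ were designed to produce. The only points one should double-check are that (i) the hypothesis "not adjacent to a vertex of degree $1$" is used precisely to guarantee $d(w)\geq 2$ for $w\in A$ (so that $1/d(w)\leq 1/2$ is the right bound), and (ii) Claim \ref{d1} is not actually needed in this step, since Claim \ref{d2} already delivers the necessary cap on the number of low-degree neighbors.
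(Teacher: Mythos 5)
Your proof is correct and follows essentially the same route as the paper: apply Claim \ref{d2} to cap the number of neighbors of degree at most $9$ by $\frac{3}{4}\Delta$, note that the hypothesis forces each such neighbor to contribute at most $\frac{1}{2}$ and every other contributing neighbor at most $\frac{1}{10}$, and combine. Your version merely makes the monotonicity step (pushing $|A|$ to its maximum) explicit, which the paper leaves implicit.
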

\begin{proof}[Proof of Claim \ref{d3}]
Let $v$ be a vertex that is not adjacent to a vertex of degree $1$.
By Claim \ref{d2}, the vertex $v$ has at most $\frac{3}{4}\Delta$ neighbors of degree at most $9$, which contribute to $f(v)$ at most $\frac{1}{2}$ each;
all remaining neighbors contribute at most $\frac{1}{10}$ each.
Thus $f(v)\leq \frac{1}{2}\cdot\frac{3}{4}\Delta+ \frac{1}{10}\cdot\frac{1}{4}\Delta = \frac{2}{5}\Delta$.
\end{proof}
\noindent
For the rest of the proof,
let $v\in V(G)$ be chosen such that $f(v)$ is maximal.

\begin{case}
$v$ is adjacent to a vertex of degree $1$.
\end{case}
Recall that this implies $d(v)=\Delta$.
Let $u\in N(v)$ be a vertex of degree~$1$.
As before, we want to combine $uv$ with a maximum induced matching of $G'=G-(N[v]\cup I)$ to obtain a contradiction, where $I$ are the isolated vertices of $G-N[v]$.

If $z\in I$ has degree $d<\Delta$, then $z$ contributes exactly $d$ times exactly $\frac{1}{d}$ to $f(w)$ for some $w\in N(v)$;
that is, the total contribution to $\sum_{w\in N(v)}f(w)$ is $1$.
Since no vertex in $I$ is adjacent to $u$, 
there is no vertex $z\in I$ such that $d(z)=\Delta$.
This implies that
\begin{align}\label{sum f}
	|I|\leq \sum_{w\in N(v)}f(w).
\end{align}

Let $N_1$ and $N_{\Delta}$ be the set of vertices in $N(v)$ of degree
$1$ and $\Delta$, respectively.
Let $N_s$ be the set of vertices in $N(v)\setminus (N_1 \cup N_\Delta)$ of small degree, say such that their degree is between $2$ and $\frac{\Delta}{8}$.
Let $N_\ell=N(v)\setminus(N_1\cup N_s \cup N_\Delta)$, and let $n_1=|N_1|$, $n_s=|N_s|$, $n_\ell=|N_\ell|$, and $n_\Delta=|N_\Delta|$. 

Since all vertices in $N_s\cup N_\ell$ do not have degree $\Delta$ and by the choice of $G$,
they are not adjacent to a vertex of degree $1$.
If $w \in N_1$, then $f(w)=0$ and $w$ contributes $1$ to $f(v)$.
If $w \in N_s$, then by Claim \ref{d1}, we conclude $f(w)\leq 1$, and the contribution of $w$ to $f(v)$ is at most $\frac{1}{2}$.
If $w \in N_\ell$, then by Claim \ref{d3} and the choice of $v$, we obtain $f(w)\leq \min\left\{\frac{2}{5}\Delta,f(v)\right\}$ and the contribution of $w$ to $f(v)$ is at most $\frac{8}{\Delta}$.
If $w \in N_\Delta$, then $f(w)\leq f(v)$ and $w$ contributes nothing to $f(v)$.
These observations imply both
\begin{align*}
	f(v) \leq \frac{8}{\Delta}n_\ell + \frac{1}{2}n_s + n_1
\end{align*}
and, by using (\ref{sum f}),
\begin{align*}
	|I|\leq f(v) n_\Delta + \min\left\{\frac{2}{5}\Delta,f(v)\right\}n_\ell+ n_s.
\end{align*}
In order to prove that $|I|\leq \lceil \frac{\Delta}{2}\rceil \lfloor \frac{\Delta}{2} \rfloor$,
we show that
\begin{align}\label{eq I}
	f' n_\Delta + \min\left\{\frac{2}{5}\Delta,f'\right\}n_\ell+ n_s \leq \left\lceil \frac{\Delta}{2}\right\rceil \left\lfloor \frac{\Delta}{2}\right \rfloor,
\end{align}
under the condition that $n_1,n_s,n_\ell,n_\Delta$ are non-negative integers and $n_1+n_s+n_\ell+n_\Delta=\Delta$
where 
\begin{align}\label{eq fv}
	f'=\frac{8}{\Delta}n_\ell + \frac{1}{2}n_s + n_1.
\end{align}
Let $i(n_1,n_s,n_\ell,n_\Delta)=f' n_\Delta + \min\left\{\frac{2}{5}\Delta,f'\right\}n_\ell+ n_s$.
Obviously, $|I|\leq i(n_1,n_s,n_\ell,n_\Delta)$.



Inequality (\ref{eq fv}) implies $n_s+n_1\geq f'-8$.
Thus $n_\ell+n_\Delta = \Delta- n_1-n_s \leq \Delta-f'+8$ and
hence, by (\ref{eq I}), we obtain
\begin{align*}
	i(n_1,n_s,n_\ell,n_\Delta)\leq f'(\Delta-f'+8) +\Delta.
\end{align*}
If $f'\leq \frac{2}{5}\Delta+8$, then this implies that 
$i(n_1,n_s,n_\ell,n_\Delta)\leq \frac{6}{25}\Delta^2+ \frac{24}{5}\Delta \leq
\frac{\Delta^2}{4}-1$, which implies the desired result.

Thus we may assume that $f'\geq \frac{2}{5}\Delta+8$.
Suppose $n_\ell \geq 1$ and hence $n_\Delta \leq \Delta-1$.
This implies that 
\begin{align*}
	i(n_1,n_s,n_\ell-1,n_\Delta+1)-i(n_1,n_s,n_\ell,n_\Delta)
	&\geq - \frac{8}{\Delta}n_\Delta- \frac{2}{5}\Delta + \left(f'- \frac{8}{\Delta}\right)\cdot 1\\
	&\geq - \frac{8}{\Delta}(\Delta-1)- \frac{2}{5}\Delta + \frac{2}{5}\Delta+8- \frac{8}{\Delta}\\
	&= 0.
\end{align*}
Hence, we may assume that $n_\ell=0$.

Furthermore, we may assume that $n_\Delta \geq 2$; otherwise, by using $f',n_s\leq \Delta$, we conclude $i(n_1,n_s,n_\ell,n_\Delta) \leq 2\Delta$.
Suppose $n_s\geq 1$. Thus 
\begin{align*}
	i(n_1+1,n_s-1,n_\ell,n_\Delta)-i(n_1,n_s,n_\ell,n_\Delta)\geq \frac{1}{2} \cdot 2 - 1\geq 0.
\end{align*}
Therefore, we may assume that $n_s=0$.
Thus $n_1=\Delta-n_\Delta$ and (\ref{eq fv}) implies that $f'=n_1$.
By using (\ref{eq I}), we conclude
$$i(n_1,n_s,n_\ell,n_\Delta)=n_\Delta(\Delta-n_\Delta)\leq \left\lceil \frac{\Delta}{2}\right\rceil \left\lfloor \frac{\Delta}{2}\right \rfloor.$$

Therefore, $|N[v]|+|I|\leq \left(\lceil\frac{\Delta}{2}\rceil+1\right) \left(\lfloor\frac{\Delta}{2}\rfloor+1\right)$
and the edge $uv$ together with a maximum induced matching of $G'$ yields
$\nu_s(G)\geq \frac{n(G)}{\left(\lceil\frac{\Delta}{2}\rceil+1\right) \left(\lfloor\frac{\Delta}{2}\rfloor+1\right)}$,
which is a contradiction to our choice of $G$.

\begin{case}
$v$ is not adjacent to a vertex of degree $1$.
\end{case}
Let $u\in N(v)$ such that $d(u)$ is minimal.
Let $S=N[u]\cup N[v]$ and $G'=G-S-I$ where $I$ is the set of isolated vertices of $G-S$.
By double counting the edges between $S$ and $I$, it is straightforward to see that $I$ contains at most $2\Delta$ vertices of degree $\Delta$.
Thus similarly as in (\ref{sum f}), we conclude that
\begin{align}\label{eq 4}
	|I|\leq \sum_{w\in S\setminus\{u,v\}}f(w) + 2\Delta. 
\end{align}
If $d(u)\geq 10$, 
then $f(v) \leq \frac{\Delta}{10}$.
Thus $|I|\leq \frac{\Delta^2}{5}+2\Delta$ and hence $|S|+|I|\leq \frac{\Delta^2}{4}$. 
Therefore, $uv$ together with a maximum induced matching of $G'$ yields
$\nu_s(G)> \frac{n(G)}{\left(\lceil\frac{\Delta}{2}\rceil+1\right) \left(\lfloor\frac{\Delta}{2}\rfloor+1\right)}$,
which is a contradiction to our choice of $G$.

Thus we may assume that $d(u)\leq 9$ and hence trivially $\sum_{w\in N(u)\setminus \{v\}}f(w)\leq 8\Delta$ and $|S|\leq \Delta+9$.
Let $N_s$ be the set of neighbors of $v$ of degree at most $\frac{\Delta}{8}$, let $N_\ell=N(v)\setminus N_s$,
and let $\alpha= \frac{|N_s|}{\Delta}$ and hence $N_\ell \leq (1-\alpha)\Delta$.

The contribution of the vertices in $N_s$ to $f(v)$ is at most $\frac{\alpha\Delta}{2}$.
Using Claim \ref{d1}, we conclude that $f(w)\leq 1$ for $w\in N_s$.
The contribution of the vertices in $N_\ell$ to $f(v)$ is at most $8$
and $f(w)\leq f(v)$ for $w\in N_\ell$ by the choice of $v$.
This implies that $f(v)\leq \frac{\alpha\Delta}{2}+8$.
Note that $(1-\alpha)\frac{\alpha}{2}\leq \frac{1}{8}$.
Moreover, by (\ref{eq 4}), we obtain
\begin{align*}
	|I|& \leq  \sum_{w\in N(v)\setminus \{u\}}f(w)+\sum_{w\in N(u)\setminus \{v\}}f(w)+2\Delta\\
	& \leq \sum_{w\in N(v)\setminus \{u\}: w\in N_\ell}f(w)+\sum_{w\in N(v)\setminus \{u\}: w\in N_s}f(w)+8\Delta+2\Delta\\
	& \leq  (1-\alpha)\Delta f(v) + \alpha\Delta + 10\Delta\\
		&\leq (1-\alpha)\Delta \left(\frac{\alpha\Delta}{2}+8\right) + 11\Delta\\
	&\leq  \frac{\Delta^2}{4}-2\Delta.
\end{align*}
Thus $|I|+|S|\leq  \frac{\Delta^2}{4}$.
Therefore, $uv$ together with a maximum induced matching of $G'$ yields
$\nu_s(G)> \frac{n(G)}{\left(\lceil\frac{\Delta}{2}\rceil+1\right) \left(\lfloor\frac{\Delta}{2}\rfloor+1\right)}$,
which is the final contradiction.
\hfill $\Box$

\section{Graphs with Small Maximum Degree
}
Let $C_5^2$ be the graph obtained from the $5$-cycle by replacing every vertex by an independent set of order $2$ and
let $K_{3,3}^+$ be the graph obtained from the $5$-cycle by replacing the vertices by independent sets of orders $1,1,1,2$, and $2$, respectively.
Note that the graph $K_{3,3}^+$ can also be obtained from a $K_{3,3}$ by subdividing one edge once.
The graphs $C_5^2$ and $K_{3,3}^+$ show that Theorem \ref{result max degree} is not true for graphs of maximum degree $3$ or $4$.
However, I conjecture that these graphs are the only exceptions.

\begin{conjecture}\label{conj general}
If connected graph $G\notin\{C_5^2,K_{3,3}^+\}$ with maximum degree $\Delta\geq 3$,
then
\begin{align*}
\nu_s(G) \geq \frac{1}{\left(\lceil\frac{\Delta}{2}\rceil+1\right) \left(\lfloor\frac{\Delta}{2}\rfloor+1\right)}n(G).	
\end{align*}
\end{conjecture}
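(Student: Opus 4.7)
The natural plan is strong induction on $n(G)$ with a minimum counterexample $G\notin\{C_5^2,K_{3,3}^+\}$. The case $\Delta\geq 1000$ is immediate from Theorem~\ref{result max degree}, so one may restrict to $3\leq\Delta\leq 999$. As in that proof, the core move is to locate an edge $uv$ such that $|N[u]\cup N[v]|$ plus the number of vertices isolated in $G-(N[u]\cup N[v])$ is at most $(\lceil\Delta/2\rceil+1)(\lfloor\Delta/2\rfloor+1)$; adding $uv$ to a maximum induced matching of the reduced graph, which satisfies the bound by induction, then contradicts minimality.

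For the middle range $10\leq\Delta\leq 999$, I would replay the proof of Theorem~\ref{result max degree} with tightened constants. The threshold $9$ in Claim~\ref{d2} and the threshold $\Delta/8$ in Case~2, together with the slack term $2\Delta$ appearing in $|I|\leq \Delta^2/4-2\Delta$, were chosen to be comfortable at $\Delta\geq 1000$. Replacing them by sharper $\Delta$-dependent values, possibly after splitting the range into a few intervals, should suffice: no new extremal configurations appear and the induction closes without obstruction. This part is tedious rather than conceptually hard.

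The main difficulty is the small range $\Delta\in\{3,4,\ldots,9\}$, and above all $\Delta=3$. Here the quadratic slack $\Delta^2/4$ is gone and one cannot afford to reduce blindly; instead, a direct structural analysis of a minimum counterexample is required. For $\Delta=3$ the target is $\nu_s(G)\geq n(G)/6$, so a successful reduction at an edge $uv$ costs at most $6$ vertices, forcing both $d(u)+d(v)\leq 4$ and very few newly isolated vertices. One would enumerate irreducible local configurations around vertices of degree at most $2$ and around triangles, showing that each either admits an alternative reduction or forces the entire graph to be $C_5^2$ or $K_{3,3}^+$. Since both exceptions are built on a $5$-cycle backbone, I expect that induced $5$-cycles with low-degree attachments form the precise terminal obstruction, and isolating them will constitute the bulk of the work. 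The cases $\Delta\in\{4,\ldots,9\}$ should admit analogous but easier analyses, possibly assisted by a finite verification for small orders, since the exceptional behavior of $C_5^2$ and $K_{3,3}^+$ does not propagate to larger $\Delta$.
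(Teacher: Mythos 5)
This statement is Conjecture~\ref{conj general}; the paper does not prove it. It only records that the case $\Delta=3$ follows from \cite{jorasa} and the case $\Delta\geq 1000$ follows from Theorem~\ref{result max degree}, leaving $4\leq\Delta\leq 999$ open. Your submission is accordingly a research plan rather than a proof, and its two substantive claims both have genuine gaps. First, for the range $10\leq\Delta\leq 999$ you assert that replaying the proof of Theorem~\ref{result max degree} ``with tightened constants'' closes the induction. It does not. The paper's estimates all have the form $|I|+|S|\leq c\Delta^2+C\Delta$ with $c<\frac14$ but with sizeable linear error terms (e.g.\ $0.22\Delta^2+9\Delta$ in Claim~\ref{d2}, $\frac{6}{25}\Delta^2+\frac{24}{5}\Delta$ and $\frac18\Delta^2+19\Delta$ in the two cases), to be compared against roughly $\frac{\Delta^2}{4}+\Delta$; these fail already for $\Delta$ in the low hundreds, and for, say, $\Delta=10$ the target $\left(\lceil\frac{\Delta}{2}\rceil+1\right)\left(\lfloor\frac{\Delta}{2}\rfloor+1\right)=36$ is smaller than the linear error terms alone. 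Since the extremal graphs $H_1,H_2$ meet the bound with equality, there is essentially no additive slack at small $\Delta$, so no retuning of the thresholds ($9$, $\Delta/8$, the $2\Delta$ term) can rescue this counting; a structurally different argument is needed. Second, for $\Delta\in\{3,\dots,9\}$ you correctly identify that a direct structural analysis is required, but you do not carry it out, and you overlook a specific obstruction to the minimum-counterexample scheme: after deleting $N[u]\cup N[v]$ and the newly isolated vertices, the reduced graph may contain a component isomorphic to $C_5^2$ (for $\Delta=4$, where $n(C_5^2)=10>9$) or $K_{3,3}^+$ (for $\Delta=3$, where $n(K_{3,3}^+)=7>6$), on which the induction hypothesis fails. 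Any proof must therefore control how the exceptional graphs propagate through the reduction, not merely rule them out as the ambient graph. (For $\Delta=3$ you could simply cite \cite{jorasa} instead of redoing the hardest case from scratch.)
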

\noindent
Note that for $\Delta=3$, a result in \cite{jorasa}, and 
for $\Delta \geq 1000$, Theorem \ref{result max degree} implies Conjecture~\ref{conj general}.


\begin{thebibliography}{0}


\bibitem{bo}
B. Bollob{\'a}s, 
The independence ratio of regular graphs,
Proc. Amer. Math. Soc. 83 (1981) 433-436.

\bibitem{ca1}
K. Cameron,
Induced matchings,
Discrete Appl. Math. 24 (1989) 97-102.



\bibitem{dadelo}
K.K. Dabrowski, M. Demange, and V.V. Lozin,
New results on maximum induced matchings in bipartite graphs and beyond,
Theor. Comput. Sci. 478 (2013) 33-40.


\bibitem{ed}
J. Edmonds,
Paths, trees, and flowers,
Canad. J. Math. 17 (1965) 449-467.

\bibitem{fashgytu2}
R.J. Faudree, R.H. Schelp, A. Gy\'{a}rf\'{a}s, and  Zs. Tuza,
The strong chromatic index of graphs,
Ars Comb. 29B (1990) 205-211.




\bibitem{jorasa}
F. Joos, D. Rautenbach, and T. Sasse, Induced Matchings in Subcubic Graphs,
SIAM J. Discrete Math. 28 (2014) 468-473.



\bibitem{lo}
V.V. Lozin,
On maximum induced matchings in bipartite graphs,
Inf. Process. Lett. 81 (2002) 7-11.

\bibitem{more}
M. Molloy and B. Reed, A bound on the strong chromatic index of a graph,
J. Combin. Theory Ser. B 69 (1997) 103-109.

\bibitem{stva}
L.J. Stockmeyer and V.V. Vazirani,
NP-completeness of some generalizations of the maximum matching problem,
Inf. Process. Lett. 15 (1982) 14-19.

\bibitem{vi}
V.G. Vizing,
On an estimate of the chromatic class of a p-graph,
Diskret. Analiz. 3 (1964) 25-30.

\bibitem{zito}
M. Zito, Induced matchings in regular graphs and trees,
Graph-theoretic concepts in computer science (Ascona, 1999),  89–100,
Lecture Notes in Comput. Sci., 1665, Springer, Berlin, 1999.

\end{thebibliography}
\end{document}